\newtheorem{theorem}{Theorem}
\newtheorem{claim}[theorem]{Claim}
\newtheorem{remark}{Remark}
\newcommand{\ceil}[1]{\left\lceil{#1}\right\rceil}
\def\BP{\mathcal {BP}}
\def\C{\mathcal C}
\def\h{\mathcal H}
\newenvironment{claimproof}[1][\myproofname]{\begin{proof}[#1]}{\end{proof}}
\newcommand*{\myproofname}{Proof}
\newcommand{\abs}[1]{\left\lvert{#1}\right\rvert}
\author[1]{Ervin Győri}
\author[2]{Nika Salia}
\affil[1]{Alfr\'ed R\'enyi Institute of Mathematics.}
\affil[2]{Extremal Combinatorics and Probability Group, Institute for Basic Science, Daejeon, South Korea.}
\title{Linear three-uniform hypergraphs with no Berge path of given length}
\begin{document}
\maketitle
\begin{abstract}
Extensions of Erdős–Gallai Theorem for general hypergraphs are well studied. 
In this work, we prove the extension of Erdős–Gallai Theorem for linear hypergraphs.
In particular, we show that the number of hyperedges in an $n$-vertex $3$-uniform linear hypergraph,  without a Berge path of length $k$ as a subgraph  is at most $\frac{(k-1)}{6}n$ for  $k\geq 4$. 
\end{abstract}

\section{Introduction}

Finding the maximum number of edges in a graph with fixed order not containing another graph as a subgraph is a central problem in extremal combinatorics. This work considers problems where a path of fixed length is forbidden.  This problem is well understood for graphs and $r$-uniform hypergraphs. 
Erdős–Gallai theorem states that a graph of order $n$ containing no path of length~$k$ as a subgraph contains at most $\frac{k-1}{2}n$ edges. 
This bound is sharp for infinitely many $n$.  
In particular, equality holds if and only if $n$ is a multiple of $k$ and the graph is isomorphic to the union of $\frac{n}{k}$ cliques of size $k$. 
This theorem was extended to $r$-uniform hypergraphs by Gy{\H{o}}ri, Katona and Lemons~\cite{gyHori2016hypergraph}. In order to state their result, we will introduce the necessary definitions.

For an integer $r$, a hypergraph $\h$ is $r$-uniform if it is a family of $r$-element sets of finite family $V(\h)$. 
We will use the following extension of this definition. For a set of integers  $R$, a hypergraph $\h$ is $R$-uniform if it is a family of sets of finite family $V(\h)$, such that the sizes of the sets are elements of $R$. 
Paths in hypergraphs can be defined in number of ways. In this paper we fallow definition of Berge~\cite{berge1973graphs}. A Berge path of length $k$ in a hypergraph $\mathcal H$ is an alternating sequence  $v_1,h_1,v_2,\dots,h_k,v_{k+1}$ of distinct vertices and hyperedges such that $\{v_{i},v_{i+1}\}\subseteq  h_i$ for all $i\in[k]$. A Berge cycle of length $k$ is also defined similarly. The vertices $v_i$, $i\in[k+1]$, are defining vertices of the Berge path and the hyperedges $h_i$, $i\in[k]$, are defining hyperedges of the Berge path.

\begin{theorem}[Gy{\H{o}}ri, Katona and Lemons~\cite{gyHori2016hypergraph}]
Let $\h$ be an $n$-vertex $r$-uniform hypergraph containing no Berge path of length $k$ as a subgraph.
Then if $r\geq k>2$ then the number of hyperedges of $\h$ is at most $\frac{k-1}{r+1}n$.
If $k > r+1 >2$ then the number of hyperedges of $\h$ is at most  $ \frac{\binom{k}{r}}{k}n$.
\end{theorem}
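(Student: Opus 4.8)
The plan is to prove the per-component version of each bound and then sum over connected components: both right-hand sides are linear in $n$, so if every connected component $C$ of $\h$ satisfies $e(C)\le\frac{k-1}{r+1}|V(C)|$ in the first regime, or $e(C)\le\frac{\binom{k}{r}}{k}|V(C)|$ in the second, the theorem follows at once. Thus I would assume $\h$ connected throughout and keep the two regimes $r\ge k$ and $k>r+1$ separate, since their extremal configurations differ: in the first regime the tight block is $r+1$ vertices carrying $k-1$ of the $\binom{r+1}{r}$ possible hyperedges, whereas in the second it is the complete $r$-graph on $k$ vertices. In both cases a single block has too few hyperedges (first regime) or too few vertices (second regime) to support a Berge path of length $k$.

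The common engine is a longest Berge path $P=v_1h_1v_2\cdots h_tv_{t+1}$ with $t\le k-1$, hence $|V(P)|\le k$. Maximality forbids any extension at $v_1$ or $v_{t+1}$, so every hyperedge through an endpoint that is not one of $h_1,\dots,h_t$ cannot introduce a new vertex; a P\'osa-type rotation then enlarges the set of vertices that may serve as an endpoint and strengthens this confinement. I would set up an induction on $|V(\h)|$: identify a bounded vertex set $W$ (built from $V(P)$ or from a saturated block around an endpoint), delete it, apply the inductive hypothesis to the remainder (which still has no Berge path of length $k$, since deleting vertices cannot create one), and charge the deleted hyperedges against $\frac{k-1}{r+1}|W|$, respectively $\frac{\binom{k}{r}}{k}|W|$.

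The counting that feeds this induction is genuinely different in the two regimes. When $k>r+1$ a vertex set of size at most $k$ can contain whole hyperedges (as $k>r$), so once $V(P)$ is large enough entire hyperedges sit inside it, and the contribution of a saturated $k$-vertex block is bounded by the number of $r$-subsets of a $k$-set, namely $\binom{k}{r}$, matching the ratio $\binom{k}{r}/k$. When $r\ge k$ we have $|V(P)|\le k\le r$, so a hyperedge has at least as many vertices as all of $V(P)$ and cannot in general be confined to it; instead I would bound how many hyperedges can share a common small intersection pattern without lengthening $P$, which I aim to show caps the usable edges at $k-1$ per $r+1$ vertices and yields $\frac{k-1}{r+1}$. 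The sunflower configuration, in which all hyperedges share one vertex and are otherwise disjoint, shows that a connected component in this regime may carry arbitrarily many hyperedges, so the argument must control the edge-to-vertex ratio rather than the raw edge count.

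The step I expect to be hardest is making this deletion-and-charging argument close with exactly the right constants. In graphs the P\'osa rotation lemma is clean because an edge is its pair of endpoints, but a hyperedge spans $r$ vertices that may meet $P$ in many patterns, so describing precisely which non-path edges attach to the rotated endpoint set, and guaranteeing that each deleted hyperedge is charged to enough deleted vertices, requires a careful case analysis. In the large-edge regime this is aggravated by the fact that deleted hyperedges protrude from the deleted set $W$, forcing a delicate choice of $W$ and an amortized accounting; in the small-edge regime the difficulty is instead to keep the $r$-subset charging injective, so that overlapping $k$-vertex windows are not double counted. Reconciling the two near the boundary $k\approx r$ is the final technical point.
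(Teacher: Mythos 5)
Note first that this statement is not proved in the paper at all: it is background quoted from Győri, Katona and Lemons \cite{gyHori2016hypergraph}, so your attempt can only be judged on its own terms, against what a complete proof would require. On those terms it is a plan rather than a proof: every step carrying actual combinatorial content is deferred. The component reduction, the identification of the extremal configurations ($k-1$ hyperedges on $r+1$ vertices when $r\geq k$; disjoint complete $r$-graphs on $k$ vertices when $k>r+1$), and the toolkit you name (longest Berge path, rotation, vertex-deletion induction) are all sensible, but the two lemmas that would make the induction close are never stated, let alone proved: (i) in the regime $r\geq k$, the assertion that maximality ``caps the usable edges at $k-1$ per $r+1$ vertices'' \emph{is} the theorem, and you give no mechanism for it; (ii) in the regime $k>r+1$, the injective charging of hyperedges to $k$-vertex windows is never defined, and the ``protrusion'' problem you correctly name is left unresolved. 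The rotation step is also asserted without proof, and for Berge paths it is genuinely delicate, because the hyperedge used to rotate may coincide with a defining hyperedge of the path (the present paper's own Claim~\ref{Claim:Minimum_Circumferance} has to split into cases for exactly this reason).

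Worse, the one concrete induction step you do describe --- delete a bounded set $W$ built from $V(P)$, apply induction to the remainder, and charge the deleted hyperedges against $\frac{k-1}{r+1}\abs{W}$ --- provably fails in the regime $r\geq k$, and your own sunflower example witnesses this. Take all hyperedges through a single core vertex $u$, pairwise disjoint otherwise: every longest Berge path has the form $v_1,h_1,u,h_2,v_3$, so any bounded $W\supseteq V(P)$ contains $u$, and deleting $W$ deletes all $\frac{n-1}{r-1}$ hyperedges, which cannot be charged to $\frac{k-1}{r+1}\abs{W}=O(k)$. So deleted hyperedges must be amortized against vertices far outside $W$, which is precisely the accounting you postpone as ``the hardest step.'' Since the counting arguments are absent in both regimes and the stated induction fails on a configuration you yourself raise, there is a genuine gap; the actual proof in \cite{gyHori2016hypergraph} rests on substantially more careful arguments than the outline provides.
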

The remaining case $k = r + 1$ was settled later in~\cite{davoodi2018erdHos}. Forbidden path problems for connected graphs and hypergraphs including their stability versions are well studied, we refer interested readers to~\cite{kopylov1977maximal, balister2008connected,gyHori2018maximum,furedi20192,gyHori2019connected, furedi2016stability, furedi2018stability, gerbner2020stability}. $r$-uniform hypergraphs with bounded circumference was studied in~\cite{furedi2019avoiding, gyHori2020structure} and references therein.   

Here we introduce some necessary technical definitions. 
For a hypergraph $\h$ let $E(\h)$ be the hyperedge set and $V(\h)$ be the vertex set, we denote their sizes by $e(\h)$ and $v(\h)$ accordingly. For  a vertex set $V$, $V\subseteq V(\h)$, we define another hypergraph $\h_{V}$. Where $V(\h_{V})=V$ and $E(\h_{V})=\{h\setminus V: h\in E(\h),\abs{h\setminus V}\geq 2\}$. Note that if $\h$ is $\{2,3\}$-uniform linear hypergraph then $\h_V$ is $\{2,3\}$-uniform linear hypergraph also. 
The induced hypergraph on the vertex set $V$ is denoted by $\h[V]$.
For a hypergraph $\h$ we denote two-shadow of $\h$ by $\partial \h$.  $\partial \h$ is  a graph on the same vertex set as $\h$ and the set of edges is $\{\{u,v\}:\{u,v\}\subseteq h \in E(\h)\}$. The degree of a vertex $v$ in a hypergraph $\h$ is the number of hyperedges incident to the vertex $v$ and is denoted by $d_{\h}(v)$. The minimum degree of a vertex in a hypergraph $\h$ is denoted by $\delta_{\h}(v)$. The circumference of $\h$ is the length of the longest Berge cycle in a hypergraph $\h$ and is denoted by $c(\h)$.  The neighborhood of a vertex $v$ in a hypergraph $\h$ is denoted by $N_{\h}(v)$. For a hypergraph $\h$ and sub-hypergraph $\h'$ we denote the hypergraph on the same vertex set as $\h$ and hyperedge set $E(\h)\setminus E(\h')$ by $\h \setminus \h'$.

\section{Main results}

In this paper, we prove the extension of Erd{\H{o}}s-{G}allai theorem for linear $3$-uniform hypergraphs.

\begin{theorem}\label{thm:uniform}
Let $\h$ be an $n$ vertex $3$-uniform linear hypergraph, containing no Berge path of length $k\geq 4$. Then the number of hyperedges in $\h$ is at most $\frac{k-1}{6}n$.
\end{theorem}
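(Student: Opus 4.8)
The plan is to prove that a $3$-uniform linear hypergraph $\h$ on $n$ vertices with no Berge path of length $k$ has at most $\frac{k-1}{6}n$ hyperedges. The natural target corresponds to a density of $\frac{k-1}{6}$ hyperedges per vertex, so the extremal configuration should be built from disjoint "blocks," each being a maximal dense linear hypergraph on roughly $k$ vertices that avoids a long Berge path. I would first look for the extremal object: a linear triple system in which every pair of vertices lies in at most one triple, packed as densely as possible on a vertex set just small enough to forbid a Berge path of length $k$. This suggests an induction on $n$ (or on $e(\h)$) in which one isolates such a block and removes it, reducing to a smaller instance.

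\medskip

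First I would set up the induction on the number of vertices $n$, treating small cases and connected components separately: it suffices to bound the edge-to-vertex ratio on each connected component, since the bound $\frac{k-1}{6}n$ is additive over components. Within a connected component I would consider a longest Berge path $P = v_1, h_1, v_2, \dots, h_\ell, v_{\ell+1}$, where by hypothesis $\ell \leq k-1$. The standard Erdős–Gallai strategy is to argue that the endpoints $v_1$ and $v_{\ell+1}$, together with their incident hyperedges, are "trapped" near $P$: any hyperedge meeting $v_1$ must, by maximality of $P$, have all its vertices among the defining vertices of $P$ (otherwise we could extend or reroute to get a longer Berge path). Linearity is the key extra tool here — because two triples share at most one vertex, the number of triples confined to a fixed set of $\ell+1$ vertices is tightly controlled, and rerouting arguments that would fail for general hypergraphs become available.

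\medskip

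The core of the argument is a local density lemma: I would show that one can always find a vertex $v$ whose degree $d_{\h}(v)$ is small, or a small vertex set $V$ carrying few enough hyperedges relative to its size, so that deleting it and passing to $\h \setminus \h'$ or $\h_V$ preserves the no-long-Berge-path property while keeping the ratio below $\frac{k-1}{6}$. The delicate point is choosing $V$ so that the residual hypergraph on $V(\h) \setminus V$ is still $3$-uniform-linear (or $\{2,3\}$-uniform linear, which is why the excerpt defines $\h_V$ to retain the truncated edges of size $\geq 2$) and still forbids the path; the shadow $\partial \h$ and the auxiliary $\{2,3\}$-uniform structures are presumably the bookkeeping devices that make the induction close. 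I expect the main obstacle to be exactly this step: controlling how removing a block can create shorter hyperedges and ensuring the Berge-path length does not secretly increase, which likely forces a more refined induction over the larger class of $\{2,3\}$-uniform linear hypergraphs with a weighted edge count, rather than over $3$-uniform hypergraphs alone. Getting the weights and the exchange/rerouting arguments to interact correctly — so that both the $2$-edges and $3$-edges are charged consistently against the $\frac{k-1}{6}$ budget — is where the real work will lie.
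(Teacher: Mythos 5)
Your proposal correctly anticipates two ingredients of the actual proof: the strengthening to $\{2,3\}$-uniform linear hypergraphs with a weighted count (the paper proves that the \emph{shadow} satisfies $e(\partial\h)\le\frac{k-1}{2}n$, which is exactly the weighting you describe, since for a $3$-uniform linear hypergraph $e(\partial\h)=3e(\h)$), and the use of vertex deletion against a minimality assumption (in the paper, a minimal counterexample has $\delta_{\partial\h}\ge\lceil k/2\rceil$ precisely because a vertex of small shadow degree could be deleted). But the heart of your argument --- the ``local density lemma'' asserting that one can always find a vertex of small degree or a small vertex set whose removal preserves the forbidden-path property and keeps the ratio in check --- is exactly the statement you never prove, and it is where the entire content of the theorem lies. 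In a minimal counterexample every vertex has shadow degree at least $\lceil k/2\rceil$, so ``find a vertex of small degree'' fails outright; and there is no reason a block around a longest Berge path can be removed cleanly, because path-maximality only traps the neighbourhoods of the two \emph{endpoints} $v_1,v_{\ell+1}$, not of interior vertices, which can have many neighbours outside the path. So the crossing edges between your proposed block and the rest of the hypergraph are not controlled by the tools you invoke.

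The paper's route shows what is needed to close this gap, and it is materially different from your plan. The induction is on $k$, not on $n$ (minimality in $n$ is only an auxiliary assumption inside the induction step). From the minimum-degree condition one first proves a circumference lower bound $c(\h)\ge\lceil\frac{k+1}{2}\rceil$ (Claim~\ref{Claim:Minimum_Circumferance}), and the decomposition is then organized around a \emph{longest Berge cycle} $C_\ell$ rather than a longest path: writing $e(\partial\h)$ as the sum of edges inside $V(C_\ell)$, crossing edges, and edges of $\h'=\h_{V(\h)\setminus V(C_\ell)}$, the third term is handled by the induction hypothesis on $k$ because $\h'$ contains no Berge path of length $k-\ell$ (Claim~\ref{Claim:h'_pathlength}), and the crossing term is bounded by rotation-type claims (Claims~\ref{claim:+}, \ref{claim:+,++}, \ref{claim:+,++,+++}) showing that the trace $S(u)$ of an outside vertex on the cycle is disjoint from its own cyclic shift, which caps $\abs{S(u)}$ at roughly $\ell/2$; the cases $\ell\in\{k-3,k-2,k-1,k\}$, where the induction hypothesis gives nothing, require a separate and delicate analysis. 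None of this cycle machinery --- which is the actual work --- appears in your outline; the endpoint-trapping observation you do make is used in the paper only as the first step toward Claim~\ref{Claim:Minimum_Circumferance}.
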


Note that the upper bound is sharp for infinitely many $k$ and $n$. In particular for all $k$ for which there exists Steiner Triple System (a 3-uniform hypergraph that every pair of vertices is covered by exactlx one hyperedge) and $n$ multiple of $k$ there exists an $n$-vertex $3$-uniform linear hypergraph $\h$, containing no Berge path of length $k$ with  $\frac{k-1}{6}n$ hyperedges. 
Where $\h$ is the disjoint union of $\frac{n}{k}$ copies of $k$-vertex Steiner Triple Systems.

In order to prove Theorem~\ref{thm:uniform} with induction for $k$,  we need a stronger and more general statement of the theorem.  
\begin{theorem}\label{thm:main}
Let $\h$ be an $n$ vertex $\{2,3\}$-uniform linear hypergraph, containing no Berge path of length $k\geq 4$. Then the number of edges in $\partial \h$ is at most $\frac{k-1}{2}n$.
\end{theorem}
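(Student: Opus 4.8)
The plan is to prove the bound by strong induction on the number of vertices $n$, exploiting the fact that the quantity $e(\partial\h)$ behaves additively under vertex deletion. For a vertex set $S$ write $e_\partial(S)$ for the number of edges of $\partial\h$ meeting $S$. Since $\h$ is linear, passing to $\h_S$ (again $\{2,3\}$-uniform and linear) removes exactly those shadow edges, so $e(\partial\h)=e(\partial\h_S)+e_\partial(S)$, and $\h_S$ still has no Berge path of length $k$. Hence the whole theorem reduces to a single claim: every such $\h$ on at least one vertex contains a nonempty $S$ with $e_\partial(S)\le\frac{k-1}{2}\abs{S}$. Given this, if $\abs{S}=n$ we are done directly, and otherwise we delete $S$ and apply the induction hypothesis to $\h_S$, since $\frac{k-1}{2}(n-\abs{S})+\frac{k-1}{2}\abs{S}=\frac{k-1}{2}n$. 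This reduction is exactly what forces the strengthening from $3$-uniform hyperedge counts to $\{2,3\}$-uniform shadow counts, as deleting a vertex turns triples into pairs.

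Two cases are immediate. If $\h$ is disconnected I would apply the induction hypothesis to each component and add the bounds, which is valid because the components partition $V(\h)$ and $\partial$ respects this partition. If $n\le k$ (in particular whenever a longest Berge path is spanning, since its length is at most $k-1$), then by linearity $e(\partial\h)\le\binom{n}{2}\le\frac{k-1}{2}n$, so $S=V(\h)$ works. Thus the real content is the case where $\h$ is connected and $n>k$, so that a longest Berge path $P=v_1h_1v_2\cdots h_\ell v_{\ell+1}$ has $\ell\le k-1<n-1$ and is not spanning.

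Here I would run a P\'osa-type rotation argument on $P$. By maximality, every shadow-neighbor of the endpoint $v_1$ lies on $P$, with the single possible exception of the third vertex $x$ of $h_1$ when $h_1$ is a triple and $x\notin V(P)$; symmetrically at the other end with the third vertex $y$ of $h_\ell$. The key point is that the standard crossing argument — if $v_1$ and $v_{\ell+1}$ (or $v_1$ and $x$) were adjacent to consecutive vertices of $P$, one could reroute into a strictly longer Berge path, using connectivity and a vertex off $P$ — carries over to the Berge setting, \emph{provided} one checks that the hyperedges used in the rerouting stay pairwise distinct; this is precisely where linearity enters, since any two vertices involved share at most one hyperedge. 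The resulting disjointness of the relevant neighbor-sets along $P$ yields a bound of the form $d_\partial(v_1)+d_\partial(x)\le k$ for the start-pair (and symmetrically for $\{v_{\ell+1},y\}$), while if neither third vertex is off $P$ the two endpoints satisfy $d_\partial(v_1)+d_\partial(v_{\ell+1})\le\ell\le k-1$. Taking $S=\{v_1,x\}$, or $\{v_{\ell+1},y\}$, or $\{v_1,v_{\ell+1}\}$ accordingly, and subtracting the one shadow edge counted twice inside $S$, gives $e_\partial(S)\le k-1=\frac{k-1}{2}\abs{S}$ in every case, completing the induction.

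I expect the main obstacle to be precisely this rotation step: verifying that the rerouted sequences are genuine Berge paths, with distinct defining hyperedges, rather than mere walks in $\partial\h$, and pinning down the constant. The delicate subcase is $\ell=k-1$ with a terminal triple whose third vertex is off $P$, where the naive crossing bound is off by one and one must extract an extra forbidden adjacency (of the type $v_1\not\sim v_{\ell+1}$) from the absence of a Berge path of length $k$. The coincidences $x=y$ and $v_1\sim v_{\ell+1}$, together with the case distinction between $2$-edge and $3$-edge terminal hyperedges, will also need to be handled explicitly.
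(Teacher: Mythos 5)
Your reduction framework is sound: for linear hypergraphs the identity $e(\partial \h)=e(\partial \h_{V(\h)\setminus S})+e_{\partial}(S)$ does hold, deletion preserves linearity, $\{2,3\}$-uniformity and $\BP_k$-freeness, so it is legitimate to reduce everything to producing a nonempty $S$ with $e_{\partial}(S)\leq \frac{k-1}{2}\abs{S}$ (this is essentially the paper's minimal-counterexample setup, which is what gives the paper its condition $\delta_{\partial \h}\geq \ceil{\frac{k}{2}}$). The genuine gap is the key inequality you claim in the main case: $d_{\partial \h}(v_1)+d_{\partial \h}(x)\leq k$ for an endpoint $v_1$ and the off-path third vertex $x$ of $h_1$. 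This is false, and no rotation argument can prove it, because rotation/crossing arguments constrain vertices at \emph{opposite} ends of a longest path; two vertices at the \emph{same} end can have essentially identical neighborhoods, and adjacencies of both $v_1$ and $x$ to the same early path vertex cannot be rerouted into anything longer. Concretely, take $k=5$ and $\h$ with vertices $v_1,\dots,v_5,u_1$ and hyperedges $h_1=\{v_1,v_2,u_1\}$, $h_2=\{v_2,v_3\}$, $h_3=\{v_3,v_4\}$, $h_4=\{v_4,v_5\}$, $e_1=\{v_1,v_3\}$, $e_2=\{u_1,v_3\}$. This is linear and has no Berge path of length $5$: such a path would have to end with two consecutive shadow edges inside $\{v_1,v_2,u_1\}$, but every pair inside that set is covered only by the single hyperedge $h_1$. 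The path $P=v_1,h_1,v_2,h_2,v_3,h_3,v_4,h_4,v_5$ is a longest Berge path with $x=u_1\notin V(P)$, yet $d_{\partial \h}(v_1)=d_{\partial \h}(u_1)=3$, so $d_{\partial \h}(v_1)+d_{\partial \h}(u_1)=6>k$, and $e_{\partial}(\{v_1,u_1\})=5>4=\frac{k-1}{2}\cdot 2$; your prescribed deletion removes too many shadow edges and the induction does not close.

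This failure is not a repairable technicality of bookkeeping; it is exactly the configuration the paper has to fight hardest. In the proof of its Claim~\ref{Claim:Minimum_Circumferance}, the obstinate structure is precisely $N_{\partial \h}(v_1)=\{u_1,v_2,\dots,v_{\frac{k+1}{2}}\}$ and $N_{\partial \h}(u_1)=\{v_1,v_2,\dots,v_{\frac{k+1}{2}}\}$, whose degree sum is $k+1$; the paper escapes it only by also pinning down $N_{\partial \h}(v_2)$ and deleting the \emph{three}-element set $\{v_1,v_2,u_1\}$, which spans a triangle and hence loses at most $3\cdot\frac{k+1}{2}-3=\frac{3(k-1)}{2}$ shadow edges --- a bound that works for a $3$-set though, as the example shows, it fails for your $2$-set --- and this step needs the global minimum-degree condition coming from minimality, not just maximality of $P$. (Your case $\{v_1,v_{\ell+1}\}$ also silently needs the exceptional crossings where the witnessing hyperedge is itself a defining hyperedge, e.g.\ $h_{i+1}\ni v_1$, which break the naive rotation; the paper absorbs these by settling for cycles of length $i-1$.) Moreover, even with the $3$-set repair, this local argument only disposes of the \emph{small-circumference} situation: when rotation succeeds, one does not get a longer path but merely a long Berge cycle, $c(\h)\geq\ceil{\frac{k+1}{2}}$, and the bulk of the paper is a different mechanism altogether --- delete all vertices of a longest cycle $C_\ell$, show the remainder is $\BP_{k-\ell}$-free (Claim~\ref{Claim:h'_pathlength}), bound crossing edges via the shifted-neighborhood Claims~\ref{claim:+}--\ref{claim:+,++,+++}, and induct on $k$ (not on $n$) with separate analyses for $\ell\in\{k-3,k-2,k-1,k\}$. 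Your proposal has no counterpart to this large-circumference machinery, so the argument as outlined cannot be completed.
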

 
 It is easy to settle the remaining cases when $k\leq 4$. So Theorem~\ref{thm:uniform} is a direct corollary of Theorem~\ref{thm:main}. 
\begin{remark}\label{Remark:k=1,2,3Classification}
Let $\h$ be an $n$ vertex linear $\{2,3\}$-uniform hypergraph, containing no Berge path of length $k$.  
\begin{itemize}
    \item  If $k=1$ then $e(\partial \h)=0$;
    \item If $k=2$  then $e(\partial \h)\leq v(\h)$; The upper-bound is sharp and the equality is achieved if and only if is $v(\h)$ multiple of $3$ and $\h$ is  $\frac{v(\h)}{3}$ independent hyperedges of size three.
    
    \item If $k=3$ then $e(\partial \h)\leq 3\frac{v(\h)-1}{2}$. The upper-bound is sharp and the equality is achieved if and only if $v(\h)$ is odd and $\h$ is  $\frac{v(\h)-1}{2}$  hyperedges of size three sharing the same vertex for every $n\geq 3$.
\end{itemize}
\end{remark}

\section{Proof of Theorem~\ref{thm:main} }

We prove Theorem~\ref{thm:main} by induction on $k$.  At first, we consider the base case when $k=4$. We may assume $\h$ is a connected hypergraph since the upper bound is linear for $n$ and the additive constant is 0. 
If $\h$ is Berge cycle free then $e(\partial \h)\leq \frac{3(n-1)}{2}$  (the upper-bound is attained by hyperedges of size three sharing a fixed vertex). 
If $\h$ contains a Berge cycle it must be a Berge cycle of length $3$ or~$4$ since it is a linear hypergraph. 
If $\h$ contains Berge cycle of length $4$ then by connectivity $v(\h)\leq 4$, hence $e(\h)\leq \binom{4}{2}= \frac{3n}{2}$.
If $\h$ contains a cycle of length $3$, we denote it by $C_3$. Cycle $C_3$ is a linear cycle since $\h$ is a linear hypergraph. If all of the hyperedges of $C_3$ are size three then by the connectivity of $\h$ we have  $\h=C_3$ and $e(\partial \h)=9= \frac{3n}{2}$. 
If two of the hyperedges are size three then by the connectivity of $\h$ we have 
 $\h=C_3$ and  $e(\partial \h)=7< \frac{3n}{2}$. 
 If at most one hyperedge is size three then we have  $e(\partial \h)\leq \frac{3n}{2}$.  So the base case $k=4$ is done. 

Let $\h$ be an $n$-vertex linear $\{2,3\}$-uniform hypergraph containing no Berge path of length $k$ for some integer $k>4$. 
Suppose by way of contradiction that  $e(\partial \h)> \frac{n(k-1)}{2}$.
Without loss of generality, we may assume $n$ is minimal, in particular, we assume all linear $\{2,3\}$-uniform hypergraphs containing no Berge path of length $k$ with $n'$ vertices, $n'<n$, contain at most $\frac{n(k-1)}{2}$ edges in the shadow. 
Note that from the minimality of $n$ we have  the hypergraph $\h$ is connected.
Even more, for each vertex $v$, $\h_{V(\h)\setminus \{v\}}$ contains no Berge path of length $k$, thus from the minimality   of $n$ we have $d_{\partial\h}(v)>\frac{k-1}{2}$. Hence we have $\delta_{\partial\h}(v)\geq \ceil{\frac{k}{2}}$.   Note that since $e(\partial \h)> \frac{n(k-1)}{2}$ the longest path of $\h$ is length $k-1$ by the induction hypothesis.

\begin{claim}\label{Claim:Minimum_Circumferance}
    $c(\h)\geq \ceil{\frac{k+1}{2}}$
\end{claim}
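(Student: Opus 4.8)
The plan is to run a Dirac/Erdős–Gallai style longest-path argument, adapted to Berge cycles of a linear hypergraph. Since $e(\partial\h)>\frac{n(k-1)}{2}$, the induction hypothesis (as already noted) forces the longest Berge path of $\h$ to have length exactly $k-1$; fix one, say $P=v_1,h_1,v_2,\dots,h_{k-1},v_k$, with defining vertices $v_1,\dots,v_k$ and defining hyperedges $h_1,\dots,h_{k-1}$. The goal is to close a long initial segment of $P$ into a Berge cycle.

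First I would establish an extension lemma for the endpoint $v_1$: every shadow-neighbour of $v_1$ lies in $\{v_2,\dots,v_k\}$, with the single possible exception of the third vertex $w$ of $h_1$ in case $h_1=\{v_1,v_2,w\}$ is a triple with $w\notin V(P)$. Indeed, if $u$ is a shadow-neighbour of $v_1$, linearity makes the hyperedge $g$ with $\{v_1,u\}\subseteq g$ unique; if $u\notin V(P)$ and $g\notin\{h_1,\dots,h_{k-1}\}$, then $u,g,v_1,h_1,v_2,\dots,v_k$ is a Berge path of length $k$, a contradiction. A short check using $\abs{g}\le 3$ shows that $g=h_i$ with $i\ge 2$ would force $h_i=\{v_1,v_i,v_{i+1}\}$ and hence $u\in V(P)$, so the only surviving possibility is $g=h_1$ and $u=w$. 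Consequently $v_1$ has at least $\delta_{\partial\h}(v_1)-1\ge\ceil{k/2}-1$ shadow-neighbours among $v_2,\dots,v_k$.

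Next, let $v_m$ be the shadow-neighbour of $v_1$ of largest index on $P$ and consider the closed walk $v_1,v_2,\dots,v_m,v_1$. Since all on-path neighbours of $v_1$ have index in $\{2,\dots,m\}$ and there are at least $\ceil{k/2}-1$ of them, a slot count gives $m\ge\ceil{k/2}$, improving to $m\ge\ceil{k/2}+1$ whenever $v_1$ has no off-path neighbour. It remains to lift this walk to a genuine Berge cycle: the edges $v_iv_{i+1}$ use the distinct hyperedges $h_1,\dots,h_{m-1}$, and the closing edge $v_mv_1$ uses the unique hyperedge $g$ of $\{v_1,v_m\}$; by linearity $g$ can coincide with some $h_i$ only if a triple equals $\{v_1,v_2,v_m\}$ or $\{v_1,v_{m-1},v_m\}$, and in each such degenerate case the ambient triangle supplies an alternative closing hyperedge or a nearby admissible chord. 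This already settles all odd $k$, where $\ceil{k/2}=\ceil{\frac{k+1}{2}}$, and it settles even $k$ whenever $v_1$ has no off-path neighbour.

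The hard part is the remaining case, $k$ even together with an off-path neighbour $w$ of $v_1$, where the count above only guarantees $m\ge k/2=\ceil{\frac{k+1}{2}}-1$. Here I would recover the missing unit by exploiting the symmetry of the two interchangeable endpoints: the path $w,h_1,v_2,\dots,v_k$ is also longest, so $w$ (like every interior vertex) again has shadow-degree at least $\ceil{k/2}$. In the extremal configuration this forces the neighbourhoods of both $v_1$ and $w$ onto the initial segment $v_2,\dots,v_{k/2}$, and a rotation of $P$ at $v_m$ — or a direct exchange using these two endpoints together with a single $h_1$-edge — yields a Berge cycle on $k/2+1$ vertices. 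This parity-recovery step, together with the bookkeeping that every closing edge is realised by a hyperedge distinct from the ones already used, are the only genuinely delicate points; the rest is the standard longest-path rotation repackaged for Berge cycles, where linearity is exactly what lets shadow adjacencies lift to admissible Berge steps.
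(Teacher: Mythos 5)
Your first half — confining the shadow-neighbours of $v_1$ to $\{v_2,\dots,v_k\}$ plus possibly the third vertex of $h_1$, and turning a chord $v_1v_i$ into a Berge cycle of length $i$ or $i-1$ — is exactly the paper's opening, and it is correct. The proof breaks down at the two places you yourself flag as ``delicate,'' and in both places the hand-wave does not go through.

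First, the degenerate closing case. If the unique hyperedge $g$ containing $\{v_1,v_m\}$ is $h_{m-1}=\{v_1,v_{m-1},v_m\}$, then by linearity there is \emph{no} ``alternative closing hyperedge'' at all, and the ``nearby admissible chord'' $v_1v_{m-1}$ lies in the \emph{same} hyperedge $h_{m-1}$, so it only closes a cycle of length $m-1$. In the critical situation — $v_1$ has an off-path neighbour $u_1$ (so $h_1=\{v_1,v_2,u_1\}$) and $m=\ceil{k/2}$ exactly — this gives a cycle of length $\ceil{k/2}-1$, one short of the target. So odd $k$ is \emph{not} ``already settled'' by your count; this case needs the endpoint exchange with $u_1$ (one can check that the hyperedge joining $u_1$ to $v_{(k+1)/2}$ cannot also be $h_{(k-1)/2}$, since $h_{(k-1)/2}$ already contains $v_1,v_{(k-1)/2},v_{(k+1)/2}$), an argument you only invoke for even $k$. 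The same degenerate chord also kills your treatment of even $k$ when $v_1\sim v_{k/2+1}$ via $h_{k/2}$; the paper handles that subcase separately by switching to the other endpoint $v_k$.

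Second, and more seriously, the even-$k$ endgame. In the extremal configuration $N_{\partial\h}(v_1)=\{u_1,v_2,\dots,v_{k/2}\}$ and $N_{\partial\h}(u_1)=\{v_1,v_2,\dots,v_{k/2}\}$, a Berge cycle on $k/2+1$ vertices must use \emph{all} of $v_1,u_1,v_2,\dots,v_{k/2}$. But the three pairs inside $\{v_1,v_2,u_1\}$ are all covered only by the single hyperedge $h_1$, so at most one of them can serve as a cycle edge; hence $u_1$ must be inserted strictly inside the segment, which requires two further hyperedges (joining $u_1$ to consecutive $v_j,v_{j+1}$) that are distinct from each other and from every $h_i$ used — nothing in your proposal guarantees such a position $j$ exists. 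The paper does not attempt this construction at all. Instead it pins down a third neighbourhood, $N_{\partial\h}(v_2)$, by building a new longest path $v_2,h_1,u_1,h',v_3,h_3,\dots,v_k$ (using that $h_3$ avoids one of $v_1,u_1$), and then closes by contradicting the \emph{vertex-minimality of the counterexample}: the triangle $\{v_1,v_2,u_1\}$ is incident to exactly $3\ceil{k/2}-3\leq 3(k-1)/2$ shadow edges, so $\h_{V(\h)\setminus\{v_1,v_2,u_1\}}$ has more than $\frac{(n-3)(k-1)}{2}$ shadow edges and no Berge path of length $k$ — impossible for a minimal $n$. This deletion-plus-counting step is the decisive tool in the paper's proof, and your proposal never uses the minimality of $n$; without it (or a genuinely worked-out cycle construction, which the local data may not even support), the claim is not proved.
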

\begin{claimproof}
Let $P:=v_1,h_1,v_2,h_2,\dots,h_{k-1},v_{k}$ be a longest Berge path of $\h$.
Let $u$ be a vertex adjacent to vertex $v_1$ with a hyperedge $h_u$. 
Then either $u\in \{v_2,v_3,\dots,v_{k}\}$ or $h=h_1=\{v_1,v_2,u\}$ since $P$ is a longest Berge path.  
Therefore all vertices from $N_{\partial\h}(v_1)$ and $N_{\partial\h}(v_k)$ are defining vertices of the Berge path but at most one.  
Evenmore, if $v_1$ is adjacent to vertex $v_i$ with a non-defining hyperedge $h_{v_i}$ then we have a Berge cycle of length $i$, namely $v_1,h_1,v_2,h_2,\dots,v_i,h_{v_i},v_1$.
If $h_{v_i}$ is a defining hyperedge of $P$ then it is either $h_{i-1}$ or $h_i$ since $\h$ is $3$-uniform. 
Therefore  there is a cycle of length $i-1$ or $i$. 
Hence either $c(\h)\geq \ceil{\frac{k+1}{2}}$ or
\[
N(v_1)\subseteq \{v_2,v_3,\dots v_{\ceil {\frac{k+1}{2}}}\}\cup  \left(h_1 \setminus \{v_1,v_2\dots,v_k\} \right).
\]
Note that if $v_1$ is incident to the vertex $v_{\ceil {\frac{k+1}{2}}}$ then either $c(\h)\geq \ceil{\frac{k+1}{2}}$ or $h_{1}=\{v_1,v_{\ceil {\frac{k+1}{2}}},v_{2}\}$ or 
$h_{\ceil {\frac{k-1}{2}}}=\{v_1,v_{\ceil {\frac{k-1}{2}}},v_{\ceil {\frac{k+1}{2}}}\}$.

If $k$ is odd the degree of $v_1$ is at least $\frac{k+1}{2}$ in $\partial \h$. Thus $N_{\partial \h}(v_1)= \{u_1,v_2,v_3,\dots v_{\frac{k+1}{2}}\}$, where $h_1=\{u_1,v_1,v_2\}$ and $u_1\notin \{v_i:i\in[k]\}$.  Note that the vertex $v_1$ can be exchanged with the vertex $u_1$, thus $N_{\partial \h}(u_1)= \{v_1,v_2,v_3,\dots v_{\ceil {\frac{k+1}{2}}}\}$ or  $c(\h)\geq \ceil{\frac{k+1}{2}}$ and we are done. 
The hyperedge $h_3$ is not incident to either $v_1$ or $u_1$, without loss of generality we may assume $u_1\notin h_3$. Since $u_1$ and $v_3$ are adjacent vertices, let $h'$ be a hyperedge incident to both. Note that if $h'$ is a $P$ defining hyperedge it must be $h_2$. The following is a longest Berge path, $v_2, h_1,u_1,h',v_3,h_3,\dots,v_k$. Thus similarly as for $v_1$ we have $N_{\partial \h}(v_2)= \{u_1,v_1,v_3,\dots v_{\frac{k+1}{2}}\}$ or  $c(\h)\geq \ceil{\frac{k+1}{2}}$ and we are done. 
The number of edges incident to $v_1,v_2,u_1$ in $\partial \h$ is $3\frac{k+1}{2}-3$, a contradiction to the minimality of $\h$ since the hypergraph $\h_{V(\h)\setminus \{v_1,v_2,u_1\}}$ contains more that $\frac{(n-3)(k-1)}{2}$ edges in the shadow and no Berge path of length $k$.

If $k$ is even and if $v_1$ is incident to $v_{\frac{k+2}{2}}$ then  $N_{\partial \h}(v_k)= \{u_k,v_{\frac{k+2}{2}},v_{\frac{k+4}{2}},\dots, v_{k-1}\}$, where $h_{k-1}=\{u_k,v_k,v_{k-1}\}$ and $u_k\notin \{v_i:i\in[k]\}$, or  $c(\h)\geq \ceil{\frac{k+1}{2}}$ and we are done. In this case we may finish the proof with the same argument provided for   odd $k$.

If $k$ is even and if $v_1$ is not incident to $v_{\frac{k+2}{2}}$ then $N_{\partial \h}(v_1)= \{u_1,v_2,v_3,\dots v_{\ceil {\frac{k}{2}}}\}$, where $h_1=\{u_1,v_1,v_2\}$ and $u_1\notin \{v_i:i\in[k]\}$, or  $c(\h)\geq \ceil{\frac{k+1}{2}}$ and we are done.   In this case, we may finish the proof with the same argument provided for odd~$k$.
\end{claimproof}

Let $\C_{\ell}:=v_1,h_1,v_2,h_2,\dots h_{\ell-1},v_{\ell},h_{\ell},v_1$ be a longest Berge cycle of $\h$. 
Some $\C_{\ell}$  defining hyperedges $h_i$ are size three, let us denote the third vertex by $x_i$, that is $h_i=\{v_i,v_{i+1},x_i\}$ for hyperedges of size three.
From Claim~\ref{Claim:Minimum_Circumferance} we have $\ell\geq \ceil{\frac{k+1}{2}}$.
Let us denote the hypergraph $\h_{V(\h)\setminus \{v_i:i\in[\ell]\}}$ by $\h'$. 

\begin{claim}\label{Claim:h'_pathlength}
The hypergraph $\h'$ is $\BP_{k-\ell}$-free.   
\end{claim}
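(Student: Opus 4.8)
The plan is to argue by contradiction. Suppose $\h'$ contains a Berge path $Q$ of length $k-\ell$. Each hyperedge of $\h'$ is a hyperedge of $\h$ restricted to $V(\h')=V(\h)\setminus\{v_1,\dots,v_\ell\}$ (keeping those of size at least two), so $Q$ lifts to a Berge path $Q^{*}$ in $\h$ with the same defining vertices $u_0,\dots,u_{k-\ell}$ and the corresponding \emph{original} hyperedges $\tilde g_1,\dots,\tilde g_{k-\ell}$ of $\h$. By construction every defining vertex of $Q^{*}$ avoids $\{v_1,\dots,v_\ell\}$, hence $Q^{*}$ is vertex-disjoint from $\C_\ell$ and its defining hyperedges are distinct from $h_1,\dots,h_\ell$. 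The goal is to splice $Q^{*}$ with $\C_\ell$ into a Berge path of length $k$, contradicting the hypothesis that $\h$ has no Berge path of length $k$.

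The key observation is that a single fresh hyperedge suffices. Suppose an endpoint of $Q^{*}$, say $u_0$, is joined in $\partial\h$ to a cycle vertex $v_j$ by a hyperedge $f\notin E(\C_\ell)\cup E(Q^{*})$. Opening $\C_\ell$ at an edge incident to $v_j$ produces a Berge path on all $\ell$ cycle vertices of length $\ell-1$ starting at $v_j$; concatenating $Q^{*}$ (length $k-\ell$), the connector $f$ (length $1$), and this opened cycle (length $\ell-1$) gives a Berge path of length $(k-\ell)+1+(\ell-1)=k$. Its defining vertices are distinct, since those of $Q^{*}$ lie off the cycle, and its defining hyperedges are distinct because $f$ is fresh and one cycle edge was discarded. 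This is a genuine Berge path of length $k$, the desired contradiction, and the symmetric statement holds for the endpoint $u_{k-\ell}$.

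To extract such an $f$ I would first take $Q$ to be a \emph{longest} Berge path of $\h'$, so that neither endpoint extends inside $\h'$; then every $\partial\h$-neighbor of $u_0$ or $u_{k-\ell}$ lies either among the defining vertices of $Q^{*}$ or among the cycle vertices, a neighbor off both either extending $Q$ or supplying $f$ at once. Using $\delta_{\partial\h}(u_0)\geq\ceil{k/2}$ and $\ell\geq\ceil{\frac{k+1}{2}}$, the endpoints carry many chords into $Q^{*}$, and a Pósa-type rotation on these chords should either lengthen $Q$ (contradicting its maximality in $\h'$), close a Berge cycle on the defining vertices of $Q^{*}$, or rotate an endpoint onto a vertex that does see $\C_\ell$; in each case one obtains either a Berge path of length $k$ or a Berge cycle longer than $\C_\ell$, contradicting the choice of $\C_\ell$ as a longest Berge cycle.

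The main obstacle is precisely this last step: ruling out the configuration in which both endpoints of $Q^{*}$ send their entire high-degree neighborhoods back into $Q^{*}$ and never meet $\C_\ell$. Here the size-two versus size-three distinction must be handled with care. A truncated size-three hyperedge $\tilde g_i=\{u_{i-1},u_i,v_j\}$ already touches the cycle, but reusing it both as an edge of $Q^{*}$ and as the connector $f$ yields only a path of length $k-1$, which is permitted; so the argument must genuinely produce a \emph{separate} connector, or instead close a longer cycle. I expect the detailed casework to concentrate on these truncated hyperedges and on the linearity constraint, which controls how their third vertices attach to the cycle and ensures no hyperedge is used twice in the spliced path.
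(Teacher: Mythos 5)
Your first two paragraphs match the paper's opening move: lift a longest Berge path of $\h'$ back to $\h$, note that $3$-uniformity forces its defining hyperedges to be distinct from those of $\C_{\ell}$, and splice a fresh connector $f$ from an endpoint into the opened cycle to get a Berge path of length $(k-\ell)+1+(\ell-1)=k$. (One small inefficiency: you exclude $f\in E(\C_{\ell})$, but such an $f$ is harmless --- one can open the cycle at $f$ itself; the paper only needs to exclude the terminal hyperedge $\tilde g_1$ of the lifted path, which keeps the set of unhandled cases smaller.) The genuine gap is in the step you yourself call the main obstacle, and your proposed resolution of it cannot work as stated. In the bad configuration your rotation argument is supposed to end by closing ``a Berge cycle on the defining vertices of $Q^{*}$'', which you then claim is ``a Berge cycle longer than $\C_{\ell}$, contradicting the choice of $\C_{\ell}$''. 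But $Q^{*}$ has only $(k-\ell)+1$ defining vertices, and by Claim~\ref{Claim:Minimum_Circumferance} we have $\ell\geq\ceil{\frac{k+1}{2}}$, hence $(k-\ell)+1\leq\floor{\frac{k+1}{2}}\leq\ell$: any Berge cycle closed on those vertices is at most as long as $\C_{\ell}$, never longer. So in precisely the cases that matter (endpoints attached to the cycle only through their own terminal hyperedges, or not attached at all), your argument terminates without a contradiction.

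What the paper does at this point, and what your proposal is missing, is a different way to turn such a cycle into a contradiction: not by comparing its length with $\ell$, but by using the connectivity of $\h$. In the truncated-connector case, the bound $\delta_{\partial\h}\geq\ceil{\frac{k}{2}}$ together with $k'\leq k-\ell$ pins down the extremal structure $\ell=\ceil{\frac{k+1}{2}}$, $k'=\ceil{\frac{k-2}{2}}$, and forces enough adjacencies among the path's defining vertices to close a Berge cycle $C'$ of length $\ceil{\frac{k}{2}}$ inside $\h'$; in the case where neither endpoint touches $\C_{\ell}$, one reruns the argument of Claim~\ref{Claim:Minimum_Circumferance} inside $\h'$ to get $c(\h')\geq\ceil{\frac{k+1}{2}}$. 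Either way one now has two vertex-disjoint Berge cycles, $\C_{\ell}$ and $C'$, in the connected hypergraph $\h$, and connectivity lets one thread a single Berge path through all defining vertices of both; since $\ell+\ceil{\frac{k}{2}}\geq k+1$, that path has length at least $k$, the desired contradiction. Without this ``two disjoint cycles plus connectivity'' step, or some substitute for it, your proof does not close.
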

\begin{claimproof}
    Consider a longest Berge path $P'$ in $\h'$, and let $P:=w_1,e_1,w_2,\dots,e_{k'},w_{k'+1}$ be the corresponding path on the same defining vertex set as $P'$ in $\h$. Since $\h$ is $3$-uniform the defining hyperedges of $P$ and $C_{\ell}$  are distinct. 
    If a terminal vertex  of $P$ is adjacent to a defining vertex of $C_{\ell}$ with a hyperedge distinct from $e_1$ then $\h$ contains a Berge path of length $\ell+k'$, thus $k'< k-\ell$ and we are done. 

If $w_1$ is adjacent to a defining vertex of $C_{\ell}$ with hyperedge $e_1$, then $k'\leq k-\ell$. 
The degree of $w_1$ in $\partial \h$ is at least $\ceil{\frac{k}{2}}$, and all the neighbors of $w_1$ are defining vertices of $P$ but one. Thus $k'\geq \ceil {\frac{k}{2}}-1$ and  either we have a contradiction by finding a path of length $k$ in $\h$ or $\ell=\ceil{\frac{k+1}{2}}$ and $k'=\ceil{\frac{k-2}{2}}$.
By the minimum degree condition, the vertex $w_k$ is incident to a $P$ non-defining vertex with the hyperedge $e_k$. 
Even more the vertex $w_1$ is incident to $w_{k'}$ with a hyperedge which is not $P$ defining. Thus $\h'$ contains a Berge cycle $C'$ of length  $\ceil{\frac{k}{2}}$.  By the connectivity of $\h$ there is a Berge path containing all defining vertices of both cycles $C_{\ell}$ and  $C'$, thus the length of the Berge path is at least $k$ a contradiction.

If neither $w_1$ nor $w_{k'+1}$ are adjacent to a vertex of $C_{\ell}$, then by the same argument as in Claim~\ref{Claim:Minimum_Circumferance} we have $c(\h')\geq \ceil{\frac{k+1}{2}}$ or a path of length at least $k$ in $\h$ which is a contradiction. 
Since $\h$ is connected there is a path in $\h$ containing all vertices of the cycle $C_{\ell}$ together with all vertices of a longest cycle in $\h'$. 
A contradiction since both cycles contains at least $\ceil{\frac{k+1}{2}}$ vertices.
\end{claimproof}

If $k-\ell \geq 4$ then by Claim~\ref{Claim:h'_pathlength} and  induction hypothesis  for hypergraph $\h'$ we have 
\begin{equation}\label{Equation:the_number_of_edges_in_dH'}
e(\partial \h')\leq \frac{(n-\ell)(k-\ell-1)}{2}.
\end{equation}
For a vertex $u \in V(\h')$ we define the set 
$S(u):=N_{\h\setminus C_{\ell}}(u)\cap V(C_{\ell})$, $L(u):=\{v_i:u=x_i\}$ and   $R(u):=\{v_{i+1}:u=x_{i}\}$.  
For a vertex set $S$ such that  $S\subseteq V(C_{\ell})$ let $S^+$ be a set $S$ shifted right, in particular 
$S^+:=\{v_i:v_{i-1}\in S\}$, the indices are taken module $\ell$. 
Similarly we definite $S^-$, in particular $S^-$ is a set for which $S=(S^-)^+$. 
Naturally we denote the set $(S^-)^-$ with $S^{--}$ and the set $(S^+)^+$ with $S^{++}$. 
Note that $L(u)^+=R(u)$, thus the size of $L(u)$ and $R(u)$ are the same.

In what follows we are going to estimate the number of edges in $\partial \h$, in the following way
\begin{equation}\label{Equation:The_number_of_edges}
    e(\partial \h)=e(\partial \h_{V(C_{\ell})})+e_{\partial \h }(V(C_{\ell}),V(\h'))+e(\partial \h'). 
\end{equation}
Noting that $e_G(A,B)$ denotes the number of edges between vertex set $A$ and $B$ in $G$.
In most cases, we will use a naive upper bound for $e(\partial \h_{V(C_{\ell})})\leq \binom{\ell}{2}$. 
For $k-\ell \geq 4$, we estimate $e(\partial \h')$ by the induction hypotheses as in Equation~\ref{Equation:the_number_of_edges_in_dH'}. 
We estimate the number of edges from $V(\h')$ to $V(C_{\ell})$, for each vertex $u\in V(\h')$ in $\partial \h$.  In particular the number of adjacent vertices to $u$ is $\abs{L(u)}+\abs{R(u)}+\abs{S(u)}$. 
Since each defining hyperedge of $C_{\ell}$ provides at most two edges crossing between the vertices $V(\h')$ and $V(C_{\ell})$ we have a naive  upper bound for $e_{\partial \h }(V(C_{\ell}),V(\h'))$ which is enough for most of the cases. 
\begin{equation}\label{Equation:crossing_number}
    e_{\partial \h }(V(C_{\ell}),V(\h'))\leq 2\ell+\sum_{u\in V(\h')} \abs{S(u)}.
\end{equation}

Since $C_{\ell}$ is a longest Berge cycle of $\h$ we are able to get an upper bound for $\abs{S(u)}$ from the following claim. 
\begin{claim}\label{claim:+}
 For a vertex $u \in V(\h')$ we have $(S(u)\cup L(u))\cap S(u)^-=\emptyset$. 
\end{claim}

\begin{proof}
  If there is  $v_i$ such that $v_i \in (S(u)\cup L(u))\cap S(u)^-$ then there are  hyperedges $h'$ and $h''$ such that $h'$ is incident to $v_{i}, u$ and hyperedge $h''$ is incident to $v_{i+1}, u$, where hyperedge $h''$ is not a defining hyperedge of $C_{\ell}$. 
  Since $\h$ is a linear hypergraph $h'$ and $h''$ are distinct hyperedges, even more, if $h'$ is a  defining hyperedge of $C_{\ell}$ then $h'=h_i$. 
  We may extend $C_{\ell}$ with the vertex $u$, in particular  instead of $v_{i},h_{i},v_{i+1}$ we take $v_{i},h',u,h'',v_{i+1}$ in the cycle $C_{\ell}$, a contradiction. 
\end{proof}

Note that if a vertex $v_i\in S(u)$ then $v_{i+1}\notin S(u)$ from Claim~\ref{claim:+}. Thus we have $\abs{S(u)}\leq \frac{\ell}{2}$ for each vertex $u$ of $\h'$. 
Therefore $e_{\partial \h }(V(C_{\ell}),V(\h'))\leq 2 \ell +\frac{\ell}{2}(n-\ell)$ from Equation~\ref{Equation:crossing_number}.  
If $k-\ell\geq 4$ then by Equation~\ref{Equation:The_number_of_edges} and ~\ref{Equation:the_number_of_edges_in_dH'} we have a contradiction
\[
e(\partial \h) \leq \binom{\ell}{2}+2\ell+ \frac{\ell(n-\ell)}{2}+ \frac{(n-\ell)(k-\ell-1)}{2}=\frac{n(k-1)}{2}+ \frac{\ell}{2}(\ell+4-k)\leq  \frac{n(k-1)}{2}.
\]
We study the rest of the possible values of $\ell$ separately,  $\ell \in \{k-3,k-2,k-1,k\}$. Let $x$ be the number of defining hyperedges of $C_{\ell}$  incident to a vertex of $\h'$. Note that $0\leq x\leq \ell$.

If $\ell=k$  then $\C_{\ell}=\h$ otherwise we have a Berge path of length $k$  in $\h$ by the connectivity of~$\h$. Thus we have  $n=k=\ell$ and
\[
e(\partial \h) \leq \binom{\ell}{2}= \frac{n(k-1)}{2}.
\]

If $\ell=k-1$ then $\h'$ contains no hyperedge by Claim~\ref{Claim:h'_pathlength}. 
Since $\h$ does not contain a Berge path of length $k$, if a hyperedge $h_i$ adjacent to a vertex from $V(\h')$, then neither $v_i$ nor $v_{i+1}$ is a vertex of $S(u)$, for all $u\in V(\h)$. In particular for $u,u'\in V(\h')$ we have $L(u)\cap (S(u))^-=\emptyset$. By this observation and  Claim~\ref{claim:+} every vertex of $V(\h')$ is adjacent to at most $\frac{k-1-x}{2}$ vertices of $C_{\ell}$ with a non-defining hyperedge, that is $\abs{S(u)}\leq \frac{k-1-x}{2}$. Thus by Equation~\ref{Equation:The_number_of_edges} we have 
\[
e(\partial \h) \leq \binom{k-1}{2}+2x+\frac{k-1-x}{2}(n-(k-1)).
\]
Hence if  $n\geq k+2$ or $n=k+1$ and $x\leq  \frac{k-1}{2}$ then we have $e(\partial \h) \leq  \frac{n(k-1)}{2}$, since $x\leq k-1$. As $e(\partial \h )>\frac{n(k-1)}{2}$ we have $n \geq k+1$.

If $n=k+1$ and $x>\frac{k-1}{2}$ then there are two $C_{\ell}$ non-defining hyperedges $h_i$ and $h_{i+1}$ such that $\{x_i,x_{i+1}\}= V(\h')$. 
Since $\h$ does not contain a Berge path of length $k$, if a defining vertex of $C_{\ell}$ is incident to both vertices of $\h'$, either both incidences are from a defining hyperedge or both incidences are from a non-defining hyperedge. 
If $v_j$ is incident to both vertices of $\h'$ with $h_{j-1}$ and $h_j$ such that $j\neq i-1,i,i+1$ then $v_{j}$ is not incident to $v_{i+1}$. Otherwise, if there is a hyperedge $f'$ incident to $v_{j}$ and $v_{j+1}$, then it is a non-defining hyperedge and the following is a Berge path or a Berge cycle of length $k$,
\[
x_{i+1},h_{i+1},v_{i+2},\dots,v_{j},f',v_{i+1},h_{i},v_i,\dots,v_{j+1},h_{j},x_j.
\]
If a vertex $v_j$ is adjacent to $x_1$ or $x_2$ with a non-defining hyperedge then $v_{j+1}$ is not adjacent to a vertex from $\{x_1,x_2\}$.
Thus for each vertex $v_j\in V(C_{\ell})$, $j\notin \{i-1,i,i+1\}$, either there is at most one vertex from $V(\h')$ adjacent to it, or if there are two then $v_jv_{i+1}$ is not an edge of $\partial \h$ or $v_{j+1}$ is not adjacent to any vertex of $V(\h')$. 
Note that if there is a defining hyperedge of  $C_{\ell}$ not incident to  a vertex of $\h'$ then we may choose $i$ such that $i-1$ has exactly one neighbor in $V(\h')$. 
If all defining hyperedges of  $C_{\ell}$ are  incident to  a vertex of $\h'$ then we may choose any $i$ from $[k-1]$.
Thus we have a contradiction from Equation~\ref{Equation:The_number_of_edges}
\[
e(\partial \h) \leq \binom{k-1}{2}+k-1+2\leq \frac{n(k-1)}{2}.
\]

For the next cases, we need another claim, which can be seen as a strengthening of Claim~\ref{claim:+}.

 \begin{claim}\label{claim:+,++}
 Let $\{u_1,u_2,u_3\}$ be a hyperedge of $\h'$. Then for each $i$ and $j$ such that $1\leq i<j\leq 3$ we have $\left(S(u_i)\cup L(u_i)\right)\cap (S(u_j)\cup L(u_j))^-=\emptyset$ and $(S(u_i)\cup L(u_i))\cap S(u_j)^{--}=\emptyset$. 
\end{claim}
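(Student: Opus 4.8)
The plan is to imitate the cycle-extension argument of Claim~\ref{claim:+}, but to exploit that the size-three hyperedge $e:=\{u_1,u_2,u_3\}$ of $\h'$ now lets us splice \emph{two} off-cycle vertices into $C_{\ell}$ simultaneously, using $e$ itself as a bridge from $u_i$ to $u_j$. Fix $i<j$ and suppose toward a contradiction that some $v_m$ lies in the first forbidden intersection, $v_m\in\left(S(u_i)\cup L(u_i)\right)\cap(S(u_j)\cup L(u_j))^-$. Unpacking the definitions, $v_m\in S(u_i)\cup L(u_i)$ supplies a hyperedge $h'$ incident to both $v_m$ and $u_i$: it is non-defining when $v_m\in S(u_i)$, and is the defining edge $h_m=\{v_m,v_{m+1},u_i\}$ when $v_m\in L(u_i)$. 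Likewise $v_m\in(S(u_j)\cup L(u_j))^-$ means $v_{m+1}\in S(u_j)\cup L(u_j)$, which supplies a hyperedge $h''$ incident to $u_j$ and to $v_{m+1}$ (non-defining, when $v_{m+1}\in S(u_j)$) or to $v_{m+2}$ (when $v_{m+1}\in L(u_j)$, using $h_{m+1}=\{v_{m+1},v_{m+2},u_j\}$).

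Next I would build a Berge cycle strictly longer than $C_{\ell}$ by rerouting the stretch of $C_{\ell}$ near $v_m,v_{m+1}$ through the detour $v_m,\,h',\,u_i,\,e,\,u_j,\,h'',\,\dots$, dropping the defining edge(s) that the detour bypasses. When both incidences are non-defining this inserts $u_i,u_j$ between $v_m$ and $v_{m+1}$ (dropping only $h_m$) and lengthens the cycle by two; when an $L$-incidence is used one instead reuses the corresponding defining edge $h_m$ or $h_{m+1}$ and reroutes so as to skip $v_{m+1}$, still netting a cycle of length at least $\ell+1>\ell$. The second statement, $\left(S(u_i)\cup L(u_i)\right)\cap S(u_j)^{--}=\emptyset$, is handled identically: here $v_m\in S(u_j)^{--}$ means $v_{m+2}\in S(u_j)$, so $u_j$ is joined to $v_{m+2}$ by a non-defining edge $h''$, and the detour $v_m,h',u_i,e,u_j,h'',v_{m+2}$ skips $v_{m+1}$ and drops the two defining edges $h_m,h_{m+1}$, again producing a cycle of length $\ell+1$. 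In every case $\ell\geq\lceil\frac{k+1}{2}\rceil\geq 3$ guarantees $v_m,v_{m+1},v_{m+2}$ are distinct, so the rerouting is well defined.

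The step I expect to require the most care, though it is routine, is verifying in each $S/L$ combination that the hyperedges traversed by the detour—namely $h'$, the bridge $e$, $h''$, and the retained defining edges of $C_{\ell}$—are pairwise distinct, so that the detour is a genuine Berge cycle. This is exactly where linearity of $\h$ enters: any two hyperedges meet in at most one vertex, and each incidence edge $h',h''$ contains a cycle vertex whereas $e\subseteq V(\h')$ contains none, which rules out $h'=e$ and $h''=e$; moreover a single size-three edge cannot contain the four distinct vertices forced on it if $h'=h''$, since $u_i\neq u_j$ and the off-cycle vertices are disjoint from $V(C_{\ell})$. The same disjointness shows the vertex sequence of the detour has no repetitions. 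Hence the detour is a Berge cycle strictly longer than $C_{\ell}$, contradicting the maximality of $C_{\ell}$ and establishing both emptiness statements.
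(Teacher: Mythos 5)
Your proposal is correct and follows essentially the same route as the paper: assume some $v_m$ lies in a forbidden intersection, extract the two incident hyperedges $h'$ and $h''$ (each either non-defining or equal to $h_m$, respectively $h_{m+1}$), and splice $u_i,\{u_1,u_2,u_3\},u_j$ into $C_{\ell}$ to produce a Berge cycle of length at least $\ell+1$, contradicting the maximality of $C_{\ell}$. If anything, you are more explicit than the paper about the delicate subcase $h''=h_{m+1}$, where one must skip $v_{m+1}$ to avoid reusing a defining hyperedge — a detail the paper's one-line ``exchange'' statement glosses over.
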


\begin{proof}
Suppose by the way of contradiction for some $i$ and $j$ such that $1\leq i<j\leq 3$ we have $v_{\gamma}\in \left(S(u_i)\cup L(u_i)\right)\cap (S(u_j)\cup L(u_j))^-$. 
Thus there is a hyperedge $h'$ incident to vertices $u_i, v_{\gamma}$ and there is a hyperedge $h''$ incident to vertices $u_j, v_{\gamma+1}$. Note that either $h'$ is not $C_{\ell}$ defining hyperedge or it is $h_{\gamma}$, similarly $h''$ is not $C_{\ell}$ defining hyperedge or it is $h_{\gamma+1}$. Thus by exchanging the path $v_{\gamma},h_{\gamma},v_{\gamma+1}$ in $C_{\ell}$ with the path 
\[
v_{\gamma},h',u_i,\{u_1,u_2,u_3\}, u_j, v_{\gamma+1}, 
\]
 we get a cycle of length $\ell+1$, a contradiction. 

In case when $(S(u_i)\cup L(u_i))\cap S(u_j)^{--}\neq \emptyset$  we get a contradiction also in the same way. 


\end{proof}

If $\ell=k-2$ then $\h'$ contains no Berge path of length two, that is it contains disjoint hyperedges of size two and three.
In the following two paragraphs, we show $\h'$ contains no hyperedge of size three, strengthening of Claim~\ref{Claim:h'_pathlength}.

Let $\{u_1,u_2,u_3\}$ be a hyperedge of $\h'$. By Claim~\ref{claim:+,++} if a vertex $v_i$ of $C_{\ell}$ is incident to a vertex from $\{u_1,u_2,u_3\}$  with hyperedge $h_i$ then $v_i$ and $v_{i+1}$ have exactly one neighbour from $\{u_1,u_2,u_3\}$. Even more either $v_{i+2}$ has no neighbour in $\{u_1,u_2,u_3\}$, or it has exactly one neighbour with hyperedge $h_{i+2}$. 
If $v_i$ has at least two neighbours from $\{u_1,u_2,u_3\}$ with a non-defining hyperedge then $v_{i+1}$ is not incident to any of the vertices $\{u_1,u_2,u_3\}$ and if $v_{i+2}$ is adjacent to a vertex from $\{u_1,u_2,u_3\}$ then it is adjacent with hyperedge $h_{i+2}$ by Claim~\ref{claim:+,++}. 
Thus on average every vertex of $C_{\ell}$ has one neighbour from $\{u_1,u_2,u_3\}$. Hence by pigeonhole principle, there is a vertex $u_i$ incident to at most $\frac{k-2}{3}$ vertices from $C_{\ell}$. 
Similarly, it is easy to note that if $k=5$, there is a vertex in $V(\h')$ not adjacent to any of the vertices from $V(C_{\ell})$.
Thus the degree of $u_i$ is at most $\frac{k+4}{3}$ in $\partial \h$. If $k=5$ the degree of $u_i$ is $2$ in $\partial \h$. 
A contradiction to the minimality of $\h$ since $\frac{k+4}{3}< \ceil{\frac{k}{2}}$. 
Thus we have $e(\partial \h')\leq \frac{n-(k-2)}{2}$.

By Claim~\ref{claim:+}, for every vertex $u\in V(\h')$, the number of adjacent vertices from $V(C_{\ell})$ is 
\[
\abs{S(u)}+\abs{L(u)}+\abs{R(u)}\leq \frac{k-2-\abs{L(u)}}{2}+2\abs{L(u)}=\frac{k-2}{2}+1.5\abs{L(u)}.
\]
This bound is not enough for the desired upper bound, in the following paragraph we improve it.

Let $T(u)$ be the number of vertices $v_i\in V(C_{\ell})$ such that $h_{i-1}=\{v_{i-1},v_{i},u\}$,  $v_{i+1}\in S(u)$ and $v_i$ is adjacent to  $v_{i+2}$ with some hyperedge $f_i$. 
Note that for every such vertex $f_i=\{v_i,v_{i+2},v_{i+3}\}$, otherwise the following is a Berge cycle of length $k-1$
\[
v_{i+1},h_i,v_i,f,v_{i+2},h_{i+2},\dots,v_{i-1},h_{i-1},u,h_u,v_{i+1},
\]
where $h_u$ is a $C_{\ell}$ non-defining hyperedge incident to $u$ and $v_{i+1}$.
Let $T'(u)$ be the number of vertices $v_i\in V(C_{\ell})$ such that $h_{i-1}=\{v_{i-1},v_{i},u\}$,  $v_{i+1}\in S(u)$ and $v_i$ is not adjacent to  $v_{i+2}$ with a hyperedge in $\h$.
By Claim~\ref{claim:+}, for every vertex $u\in V(\h')$, the number of adjacent vertices from $V(C_{\ell})$ is 
\[
\abs{S(u)}+\abs{L(u)}+\abs{R(u)}\leq \left( \frac{k-2}{2}-\abs{L(u)}+T(u)+T'(u)\right)+2\abs{L(u)}=\frac{k-2}{2}+\abs{L(u)}+T(u)+T'(u).
\]
Note that now we have number of edges in $\partial \h_{V(C_{\ell})}\leq \binom{k-2}{2}-\sum_{u\in V(\h')}T'(u)$ and $x\leq k-2-\sum_{u\in V(\h')}T(u)$. Thus we have $e_{\partial \h }(V(C_{\ell}),V(\h'))$ is at most
\begin{equation}\label{Equation:crossing}
   \sum_{u\in V(\h')} \left( \frac{k-2}{2}+\abs{L(u)}+T(u)+T'(u)\right)\leq \frac{(n-k+2)(k-2)}{2}+(k-2)+\sum_{u\in V(\h')} T'(u). 
\end{equation}
 
Finally we are done since $e(\partial \h)$ is at most
\begin{equation}\label{Equation_robust}
  \binom{k-2}{2}-\sum_{u\in V(\h')} T'(u)+\frac{(n-k+2)(k-2)}{2}+(k-2)+\sum_{u\in V(\h')} T'(u)+\frac{n-(k-2)}{2}\leq \frac{n(k-1)}{2}  
\end{equation}

For the remaining case, we need another claim, which is a strengthening of Claim~\ref{claim:+,++}.

\begin{claim}\label{claim:+,++,+++}
 Let $\{u_1,u_2,u_3\}$ and $\{u_1,u_4,u_5\}$ be  hyperedges of $\h'$ sharing a vertex. 
 Then for each $i$ and $j$ such that $2\leq i \leq 3<j\leq 5$ we have 
 $(S(u_i)\cup L(u_i))\cap (S(u_j)\cup L(u_j))^-=\emptyset$, 
 $(S(u_i)\cup L(u_i))\cap (S(u_j)\cup L(u_j))^{--}=\emptyset$, and 
 $(S(u_i)\cup L(u_i))\cap S(u_j)^{---}=\emptyset$. 
\end{claim}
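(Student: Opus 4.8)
The plan is to argue by contradiction exactly as in the proofs of Claims~\ref{claim:+} and~\ref{claim:+,++}: if one of the three intersections is nonempty, I will splice the length-two Berge path through the shared vertex $u_1$ into the longest cycle $C_{\ell}$ and obtain a strictly longer Berge cycle, contradicting the maximality of $C_{\ell}$. Concretely, suppose some $v_\gamma$ lies in $S(u_i)\cup L(u_i)$ while the witness for $u_j$ sits $d$ steps further along the cycle, where $d=1$ in the first part, $d=2$ in the second, and $d=3$ in the third. Then there is a hyperedge $h'$ incident to $u_i$ and $v_\gamma$ and a hyperedge $h''$ incident to $u_j$ and $v_{\gamma+d}$; as in Claim~\ref{claim:+,++}, each of $h',h''$ is either a $C_{\ell}$-nondefining hyperedge or, in the $L$-case, the boundary defining hyperedge $h_\gamma$ respectively $h_{\gamma+d}$.

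The key gadget is the Berge path $v_\gamma, h', u_i, \{u_1,u_2,u_3\}, u_1, \{u_1,u_4,u_5\}, u_j, h'', v_{\gamma+d}$, which uses the two distinct $\h'$-hyperedges sharing $u_1$ and visits the three new vertices $u_i,u_1,u_j$. I would replace the cycle segment $v_\gamma,h_\gamma,\dots,h_{\gamma+d-1},v_{\gamma+d}$ by this path. This deletes the $d-1$ interior cycle vertices together with the $d$ defining hyperedges $h_\gamma,\dots,h_{\gamma+d-1}$, while inserting the three vertices $u_i,u_1,u_j$ and the four hyperedges $h',\{u_1,u_2,u_3\},\{u_1,u_4,u_5\},h''$, producing a Berge cycle of length $\ell+4-d$. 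Since $d\le 3$ this is at least $\ell+1>\ell$, the desired contradiction. Linearity guarantees the two $\h'$-hyperedges are distinct and disjoint from the cycle's defining hyperedges, and, as recorded in Claim~\ref{claim:+,++}, a near $L$-attachment forces $h'=h_\gamma$, which lies inside the deleted segment and is therefore available as a connector; the remaining incidences are realised by distinct hyperedges because $\h$ is $3$-uniform and linear.

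The one point that needs care, and the reason the third part forbids only $S(u_j)^{---}$ rather than $(S(u_j)\cup L(u_j))^{---}$, is the $L$-attachment at the \emph{far} endpoint. When $v_{\gamma+d}\in L(u_j)$ the only hyperedge joining $u_j$ to $v_{\gamma+d}$ is the defining hyperedge $h_{\gamma+d}=\{v_{\gamma+d},v_{\gamma+d+1},u_j\}$, which lies outside the deleted segment; I would then reroute, connecting $u_j$ to $v_{\gamma+d+1}$ through $h_{\gamma+d}$ and skipping $v_{\gamma+d}$ altogether. This repurposes $h_{\gamma+d}$ instead of deleting it, so the resulting cycle has length $\ell+3-d$. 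For $d=1,2$ this still exceeds $\ell$, which is why $L(u_j)$ is permitted in the first two parts; for $d=3$ the reroute only yields length $\ell$, so the argument degenerates and the statement is correspondingly weakened to $S(u_j)^{---}$. Verifying in each of these sub-cases that no hyperedge is repeated is the main bookkeeping obstacle, but it follows from linearity together with the fact that an $L$-attachment forces precisely one of the boundary defining hyperedges, exactly as in the proof of Claim~\ref{claim:+,++}.
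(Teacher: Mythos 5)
Your proof is correct and is essentially the paper's own argument: the paper omits the proof of this claim, stating that it is the same as the proofs of Claims~\ref{claim:+} and~\ref{claim:+,++}, and your splicing of the length-two Berge path $u_i,\{u_1,u_2,u_3\},u_1,\{u_1,u_4,u_5\},u_j$ into $C_{\ell}$, with the length count $\ell+4-d$, is precisely that argument. Your explicit treatment of the far-endpoint $L$-attachment (rerouting through $h_{\gamma+d}$ to $v_{\gamma+d+1}$, which explains why the third part excludes only $S(u_j)^{---}$ rather than $(S(u_j)\cup L(u_j))^{---}$) is a detail the paper leaves implicit, and it matches the intended reasoning.
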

We omit the proof of Claim~\ref{claim:+,++,+++} since it is the same as  the proofs of the Claim~\ref{claim:+} and Claim~\ref{claim:+,++}.

 If $\ell=k-3$ then $\h'$ contains no Berge path of length three. Therefore connected components of $\h'$ are edges of size two, hyperedges of size three, complete graphs on three vertices with edges of size two, and stars (hyperedges of size two and three all sharing the same vertex). 

As in the previous case Equation~\ref{Equation:crossing}, we have $e_{\partial \h }(V(C_{\ell}),V(\h'))$ is at most 
\begin{equation}\label{Equation:crossing_k-2}
 \sum_{u\in V(\h')} \left( \frac{k-3}{2}+\abs{L(u)}+T(u)+T'(u)\right)\leq \frac{(n-k+2)(k-3)}{2}+k-3+\sum_{u\in V(\h')} T'(u).   
\end{equation}

 Let $f_1,f_2,\dots,f_y\in \h'$ be the hyperedges of size three sharing a vertex $u'$ for some $y>1$.   
 Let us denote $Y:=\bigcup_{i=1}^{y} f_i\setminus \{u'\}$. 
 Let $z$ be the maximum of $\abs{(S(u)\cup L(u))}$ for $u\in Y$ achieved for a vertex $w$. 
 Then for the vertex $w'$ adjacent to $w$,  $w'\in Y$, we have $\abs{(S(w')\cup L(w'))}\leq z$ but for every other vertex $w''\in Y\setminus \{w,w'\}$ we have $\abs{(S(w'')\cup L(w''))}\leq k-3-3z$ by Claim~\ref{claim:+,++,+++}. Therefore we have 
 \[
 \sum_{y\in Y} \abs{(S(y)\cup L(y))}\leq \frac{k-3}{4}\abs{Y}. 
 \]
 Note that each vertex from $Y$ is incident to $\frac{3}{2}$ edges in $\partial \h'$ on average.
 The vertex set of $\h'$ is partitioned into two parts, the first set $U'$ contains vertices that are incident to hyperedges of size three and are contained in a path of length two in $\h$, and the second set $U''$ contains the rest of the vertices.
 For  vertices  $u\in U''$ we use bound from Equation~\ref{Equation:crossing_k-2} restricted to $U'$, $\abs{(S(u)\cup L(u))}\leq \frac{k-3}{2}$. Note that each such vertex is incident to at most one edge in $\partial \h'$ on average. Finally, we have 
 \[
e(\partial \h)\leq \binom{k-3}{2}+(k-3)+(n-(k-3)-\abs{U'})\left(\frac{k-3}{2}+1\right)+\abs{U'}(\frac{k-3}{4}+\frac{3}{2})<\frac{n(k-1)}{2},
\]
a contradiction.
And the proof of Theorem~\ref{thm:main} is complete.

\section*{Acknowledgements}
The research of the first author is partially supported by the National Research, Development and Innovation Office -- NKFIH, grant K116769, K132696 and SNN117879.  
The research of the second author was supported by the Institute for Basic Science (IBS-R029-C4).

\section{Concluding Remarks}

\bibliography{Proposal.bib}

\begin{thebibliography}{10}

\bibitem{balister2008connected}
Paul~N Balister, E~Gy{\H{o}}ri, Jen{\H o} Lehel, and Richard~H Schelp.
\newblock Connected graphs without long paths.
\newblock {\em Discrete mathematics}, 308(19):4487--4494, 2008.

\bibitem{berge1973graphs}
Claude Berge.
\newblock {\em Graphs and hypergraphs}.
\newblock North-Holland, 1973.

\bibitem{davoodi2018erdHos}
Akbar Davoodi, Ervin Gy{\H{o}}ri, Abhishek Methuku, and Casey Tompkins.
\newblock An {E}rd{\H{o}}s-{G}allai type theorem for uniform hypergraphs.
\newblock {\em European Journal of Combinatorics}, 69:159--162, 2018.

\bibitem{furedi2019avoiding}
Zolt{\'a}n F{\"u}redi, Alexandr Kostochka, and Ruth Luo.
\newblock Avoiding long {B}erge cycles.
\newblock {\em Journal of Combinatorial Theory, Series B}, 137:55--64, 2019.

\bibitem{furedi20192}
Zolt{\'a}n F{\"u}redi, Alexandr Kostochka, and Ruth Luo.
\newblock On 2-connected hypergraphs with no long cycles.
\newblock {\em arXiv preprint arXiv:1901.11159}, 2019.

\bibitem{furedi2018stability}
Zolt{\'a}n F{\"u}redi, Alexandr Kostochka, Ruth Luo, and Jacques
  Verstra{\"e}te.
\newblock Stability in the {E}rd{\H{o}}s--{G}allai theorem on cycles and paths,
  {II}.
\newblock {\em Discrete Mathematics}, 341(5):1253--1263, 2018.

\bibitem{furedi2016stability}
Zolt{\'a}n F{\"u}redi, Alexandr Kostochka, and Jacques Verstra{\"e}te.
\newblock Stability in the {{E}rd{\H{o}}s--{G}allai} theorems on cycles and
  paths.
\newblock {\em Journal of Combinatorial Theory, Series B}, 121:197--228, 2016.

\bibitem{gerbner2020stability}
D{\'a}niel Gerbner, D{\'a}niel Nagy, Bal{\'a}zs Patk{\'o}s, Nika Salia, and
  M{\'a}t{\'e} Vizer.
\newblock Stability of extremal connected hypergraphs avoiding {B}erge-paths.
\newblock {\em arXiv preprint arXiv:2008.02780}, 2020.

\bibitem{gyHori2016hypergraph}
Ervin Gy{\H{o}}ri, Gyula~Y Katona, and Nathan Lemons.
\newblock Hypergraph extensions of the {E}rd{\H{o}}s-{G}allai theorem.
\newblock {\em European Journal of Combinatorics}, 58:238--246, 2016.

\bibitem{gyHori2020structure}
Ervin Gy{\H{o}}ri, Nathan Lemons, Nika Salia, and Oscar Zamora.
\newblock The structure of hypergraphs without long {B}erge cycles.
\newblock {\em Journal of Combinatorial Theory, Series B}, 2020.

\bibitem{gyHori2018maximum}
Ervin Gy{\H{o}}ri, Abhishek Methuku, Nika Salia, Casey Tompkins, and
  M{\'a}t{\'e} Vizer.
\newblock On the maximum size of connected hypergraphs without a path of given
  length.
\newblock {\em Discrete Mathematics}, 341(9):2602--2605, 2018.

\bibitem{gyHori2019connected}
Ervin Győri, Nika Salia, and Oscar Zamora.
\newblock Connected hypergraphs without long berge-paths.
\newblock {\em European Journal of Combinatorics}, 96:103353, 2021.

\bibitem{kopylov1977maximal}
GN~Kopylov.
\newblock On maximal paths and cycles in a graph.
\newblock In {\em Doklady Akademii Nauk}, volume 234, pages 19--21. Russian
  Academy of Sciences, 1977.

\end{thebibliography}
\end{document}